\newcommand{\C}{\mathbb{C}}
\newcommand{\F}{\mathbb{F}}
\newcommand{\Q}{\mathbb{Q}}
\newcommand{\Z}{\mathbb{Z}}
\newcommand{\pr}{{\mathrm{pr}}}
\newcommand{\id}{\mathrm{id}}
\DeclareMathOperator{\Aut}{Aut}
\DeclareMathOperator{\End}{End}
\DeclareMathOperator{\Gal}{Gal}
\DeclareMathOperator{\GL}{GL}
\DeclareMathOperator{\PGL}{PGL}
\DeclareMathOperator{\PSL}{PSL}
\DeclareMathOperator{\SL}{SL}
\DeclareMathOperator{\Res}{Res}
\newtheorem{thm}{Theorem}
\newtheorem{prop}[thm]{Proposition}
\newtheorem{lmm}[thm]{Lemma}
\theoremstyle{remark}
\newtheorem*{ex}{Example}
\title{Descent for projective twists of modular curves}
\author{Franciszek Knyszewski}
\email{franciszek.knyszewski@stcatz.ox.ac.uk}
\address{St Catherine's College, University of Oxford, Oxford OX1 3UJ, United Kingdom}
\date{\today}
\begin{document}
    \begin{abstract} Let $F$ be a number field and $p\geq 7$ a rational prime. We obtain a simple descent criterion characterising those projective Galois representations $\overline\rho:G_F\to \PGL_2(\F_p)$ for which the corresponding twist $X_{\overline\rho}(p)$ of the principal modular curve of level $p$ is defined over $\Q$. We also give a more concrete version of this result for representations which arise from  elliptic curves over cyclic number fields.
\end{abstract}
\maketitle
\tableofcontents
\section{Introduction}
\subsection{Background}\label{1.1} Many successful approaches to the Modularity Conjecture rely at least in part on having good control over the set of rational points on certain modular curves. One has to look no further than Wiles' magnum opus \cite{wiles1995modular}, where the \enquote*{3-5 trick} (used in tandem with the Langlands-Tunnell theorem) allows for a particular instance of Modularity Lifting to establish Fermat's Last Theorem. Generalisations of this technique have since found many more striking applications (see \cite{taylor2002remarks} for instance), and we refer the reader to \cite{buzzard2012potential} for a remarkably lucid survey on the subject.

More recently, Freitas, Le Hung and Siksek \cite{freitas2015elliptic} were able to use an effective version of the aforementioned trick in their work on the modularity of elliptic curves over real quadratic fields. This proceeds by fixing an elliptic curve $E$ over a given real quadratic field $F$, and looking at the points of $X_E(7)$ defined over certain totally real quartic extensions of $F$ (see §7 in loc. \!cit. \!for details). One can imagine trying to adapt this reasoning to tackle modularity over \text{imaginary} quadratic fields, and indeed Caraiani and Newton explore this possibility in \cite{caraiani2023modularity}, to great success. However, there is a certain technical obstruction which arises in this case, one rooted in the fact that CM-fields, as opposed to totally real fields, are rather unwieldy. 

A natural question presents itself: \textit{for which $E$ is the corresponding twisted curve $X_E(7)$ defined over $\Q$?} For if $E$ was such an elliptic curve then $X_E(7)$ would have enough points over certain degree eight CM-fields so as to guarantee that the modularity switching machinery works. In this paper, we will more generally be concerned with twists of modular curves by arbitrary Galois representations, albeit in the projective setting.

\subsection{Statement of the result}\label{1.2}
Fix a rational prime $p\geq 7$ and denote by $\Q(p)$ the unique quadratic extension of $\Q$ inside the $p$-th cyclotomic field $\Q(\zeta_p)$. Explicitly, this is $\Q(p)=\Q(\sqrt{\pm p})$, with the sign determined by $p\equiv\pm1\pmod{4}$. Note that if
$$\overline\varepsilon_p:G_\Q\to\F_p^{\times}$$ is the mod-$p$ cyclotomic character and $$\overline\varepsilon_p^\pr:G_\Q\to\F_p^\times\to \F_p^\times/\F_p^{\times 2}$$ the corresponding projective character then $\Q(p)$ is the number field cut out by $\overline\varepsilon_p^\pr$.

For the convenience of the reader, we briefly recall the relevant aspects of the theory of modular curves. Let us start with the complex picture. The group
\[\varGamma(p)=\Big\{\gamma\in\SL_2(\Z): \gamma\equiv\begin{pmatrix}
    1&0\\
    0&1
\end{pmatrix}\hspace{-2ex}\pmod{p}\Big\}\] acts on the complex upper half plane
\[\mathfrak{H}=\{z\in \C\mid \Im z>0\}\]
via Möbius transformations, and we denote the quotient space by $Y(p)=\varGamma(p)\backslash \mathfrak{H}$. The latter is endowed with the structure of a complex manifold in the obvious way. We should remark, however, that carrying out the construction of this \enquote*{obvious} complex structure on $Y(p)$ can be a little involved; this is due to the phenomenon known as \textit{ellipticity}, which refers to the existence of points on $\mathfrak{H}$ with large stabiliser. Nevertheless, $Y(p)$ is naturally a Riemann surface, whose compactification $X(p)$ is a complex projective curve known as the \textit{principal
modular curve of level $p$.} For an authoritative treatment of this, we refer to \cite[Chapter 2]{diamond2005first}.

To descend into the world of number fields, one proceeds as follows. There is a natural action of $\SL_2(\F_p)\cong \SL_2(\Z)/\varGamma(p)$ on $X(p)$ via Möbius transformations, and the latter gives rise to a Galois cover
\[X(p)\to \mathbb{P}^1(\C)\] with group $\PSL_2(\F_p)$ \cite[§7.5]{diamond2005first}. Remarkably, this map can be realised over $\Q(p)$. Fix a choice of a quadratic non-residue $v\in\F_p^\times$, and put
\[V=\begin{pmatrix}
    0&v\\-1&0
\end{pmatrix}\in \GL_2(\F_p).\]
The subgroup \[ \F_p^\times I\cup\F_p^\times V\subseteq\GL_2(\F_p)\] then defines a model for the modular curve $X(p)$ over $\Q$. The relevant construction is detailed in \cite[§IV.3]{deligne1973schemas}; see in particular the discussion given in paragraph 3.18. The model over $\Q$ will simply be denoted by $X(p).$ The base-change of $X(p)$ to $\Q(p)$ defines a Galois extension of the function field of $\mathbb{P}^1/\Q(p)$ with group $\PSL_2(\F_p)$. 

The natural action of the absolute Galois group $G_\Q$ on $X(p)$ induces an action of $G_\Q$ on $\Aut_{\overline \Q}(X(p))\cong \PSL_2(\F_p)$ via conjugation. (Note that the latter identification is where the assumption that $p\geq 7$ comes in; see the appendix to Part I of \cite{mazur1998open}.) This {inner} action is described by the homomorphism
\[\eta:G_\Q\to \Gal(\Q(p)/\Q)\xrightarrow{\sim}\langle V\rangle\to \PGL_2(\F_p).\] A proof of this can be found in \cite[Proposition 2.1]{10.1112/S0024609304004187}.

Let $F$ be a number field. Fix a projective Galois representation
\[\overline\rho: G_F\to\PGL_2(\F_p)\] with cyclotomic determinant, and put $\overline\rho^\vee(\sigma)=\overline\rho(\sigma^{-1})^t$ for the \textit{contragredient} of $\overline\rho$. Since $\det(\eta)=\overline\varepsilon_p^\pr$, the rule $$\xi:\sigma\mapsto\overline\rho^\vee(\sigma)\eta(\sigma)$$ is a 1-cocycle of $G_F$ with values in $\PSL_2(\F_p)$, and hence descends to a cohomology class
    in $H^1(G_F,\PSL_2(\F_p))$. The latter then defines a twist $X_{\overline\rho}(p)$ of $X(p)$ over $F$; see \cite[§X.2]{silverman2009arithmetic}. Its non-cuspidal, non-CM $F$-rational points classify pairs $(E,s)$ where $E$ is a non-CM elliptic curve over $F$ and $s:E[p]\cong\F_p^{\oplus 2}$ is a symplectic isomorphism (i.e., one that respects the Weil and the determinant parings) such that if $\{T_1,T_2\}$ is the basis for $E[p]$ defined by $s$ then the projective Galois representation
$$\overline\rho^\pr_{E,p}:G_F\to\PGL_2(\F_p)$$ 
corresponding to $\{T_2,T_1\}$ coincides with $\overline\rho$ \cite[Theorem 2.1]{10.1112/S0024609304004187}.

\begin{ex}
   If $E/F$ is an elliptic curve then we may consider the twist $X_E^\pr(p)$ of $X(p)$ by the projective representation $\overline\rho^\pr_{E,p}$ attached to its $p$-torsion subgroup.
\end{ex}

    The principal concern of this paper is to determine when $X_{\overline\rho}(p)$ is in fact defined over $\Q$. 
    Our main result is the following. 
\begin{thm}\label{thm1}
The curve $X_{\overline\rho}(p)$ is defined over $\Q$ if and only if $\overline \rho$ extends to a representation $\overline r:G_\Q\to\PGL_2(\F_p)$ with cyclotomic determinant.
\end{thm}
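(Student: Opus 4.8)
My plan is to run everything through the twisting formalism for the curve $X(p)/\Q$. Since $p\geq 7$ we have $\Aut_{\overline\Q}(X(p))\cong\PSL_2(\F_p)$, so for any subextension $K/\Q$ the $K$-isomorphism classes of twists of $X(p)$ are classified by $H^1(G_K,\PSL_2(\F_p))$, the Galois action being the one induced by $\eta$. The first step is to establish the descent criterion: \emph{$X_{\overline\rho}(p)$ is defined over $\Q$ if and only if $[\xi]\in H^1(G_F,\PSL_2(\F_p))$ lies in the image of the restriction map $H^1(G_\Q,\PSL_2(\F_p))\to H^1(G_F,\PSL_2(\F_p))$.} The point is that any curve $Y/\Q$ with $Y\times_\Q F\cong X_{\overline\rho}(p)$ has $Y_{\overline\Q}\cong X(p)_{\overline\Q}$, hence is itself a twist of $X(p)/\Q$, and base change of twists along $\Q\hookrightarrow F$ is exactly restriction on $H^1$; so such a $Y$ exists precisely when $[\xi]$ is a restricted class.

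The heart of the matter will be a cohomological dictionary, which I would set up as follows. The assignment $\overline r\mapsto[\,\sigma\mapsto\overline r^\vee(\sigma)\eta(\sigma)\,]$ should be a bijection between the $\PSL_2(\F_p)$-conjugacy classes of homomorphisms $\overline r:G_\Q\to\PGL_2(\F_p)$ with $\det\overline r=\overline\varepsilon_p^\pr$ and the pointed set $H^1(G_\Q,\PSL_2(\F_p))$, and likewise with $G_\Q$ replaced by $G_F$ — the latter instance being the one that sends $\overline\rho$ to $[\xi]$ — with the two bijections compatible with restriction. The verification hinges on the image of $\eta$ being the abelian subgroup of order two generated by the class of $V$: because $V^2=-vI$ is scalar, $\eta(\sigma)^2=1$ and $\eta(\sigma)\eta(\tau)=\eta(\tau)\eta(\sigma)$ in $\PGL_2(\F_p)$. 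These identities make $\sigma\mapsto\overline r^\vee(\sigma)\eta(\sigma)$ a $1$-cocycle, make the candidate inverse $c\mapsto(\sigma\mapsto c(\sigma)\eta(\sigma))$ land among genuine homomorphisms, and convert the coboundary relation into $\PSL_2(\F_p)$-conjugation of $\overline r$; here one uses that passing to the contragredient replaces conjugation by $g$ with conjugation by $g^{t}$, and that transposition preserves $\PSL_2(\F_p)$. The determinant bookkeeping is then automatic and forces the constraint appearing in the theorem: $\det\overline r^\vee=\overline\varepsilon_p^\pr=\det\eta$, so the cocycle is $\PSL_2(\F_p)$-valued, while conversely $\sigma\mapsto c(\sigma)\eta(\sigma)$ has determinant $\overline\varepsilon_p^\pr$, i.e.\ the extension $\overline r$ is necessarily of cyclotomic determinant.

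Granting the dictionary, the theorem drops out: $X_{\overline\rho}(p)$ is defined over $\Q$ $\iff$ $[\xi]$ is a restricted class $\iff$ the $F$-conjugacy class of $\overline\rho$ is the restriction of a class over $\Q$ $\iff$ $\overline\rho$ is $\PSL_2(\F_p)$-conjugate to $\overline r|_{G_F}$ for some homomorphism $\overline r:G_\Q\to\PGL_2(\F_p)$ with $\det\overline r=\overline\varepsilon_p^\pr$; and conjugating such an $\overline r$ globally by the element realising that conjugacy (which alters neither the determinant nor the homomorphism property) produces an extension with $\overline r|_{G_F}=\overline\rho$ on the nose, as asserted. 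The main obstacle will be getting the dictionary exactly right — keeping the contragredient and the $\PSL_2$-versus-$\PGL_2$ subtlety in the conjugation action straight — together with making sure the descent criterion of the first step is a genuine equivalence, i.e.\ that every $\Q$-form of $X(p)$ is captured by $H^1(G_\Q,\PSL_2(\F_p))$, which is once again where the hypothesis $p\geq 7$ enters.
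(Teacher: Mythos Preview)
Your argument is correct and is genuinely more direct than the paper's. The paper does not set up a global dictionary between $H^1(G_K,\PSL_2(\F_p))$ and $\PSL_2(\F_p)$-conjugacy classes of homomorphisms $G_K\to\PGL_2(\F_p)$ with cyclotomic determinant; instead it passes through the intermediate field $\Q(p)$, splits into the cases $\Q(p)\subseteq F$ and $\Q(p)\not\subseteq F$, and in each case characterises extendability of the cocycle $\xi$ to $G_\Q$ in terms of the existence of an extension $\overline r:G_{\Q(p)}\to\PSL_2(\F_p)$ of $\overline\rho|_{G_{F(p)}}$ (respectively of $\overline\rho$) satisfying an explicit conjugation relation under $c_p$ (Propositions~\ref{prop3} and~\ref{prop4}). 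These are then matched with a hands-on lemma (Lemma~\ref{lmm1}) saying that for a \emph{cyclic} extension $L/K$, a projective representation of $G_L$ extends to $G_K$ (with cyclotomic determinant) precisely when it satisfies a concrete ``compatibility'' condition $\overline r(\tau^d)=g_\tau^d$ for a chosen lift $\tau$ of a generator. Your route is shorter and more conceptual for Theorem~\ref{thm1} itself, exploiting cleanly that $\eta$ has image of order two so that the twisted-cocycle/homomorphism correspondence is a bijection over any base field simultaneously. The paper's detour, on the other hand, produces the explicit compatibility criterion of Lemma~\ref{lmm1} as a by-product, and this is exactly what is invoked later in the proof of Theorem~\ref{cor2} to translate the abstract extension statement into the concrete condition $(\varphi\circ\tau)^d=\pm\tau^d$ on $E[p]$; with your approach one would still need to supply that lemma separately when turning to the elliptic-curve application.
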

We would like to draw the reader's attention to an instance of Theorem \ref{thm1} which is particularly relevant to the discussion given in Section \ref{1.1}. 

\begin{thm}\label{thm2} Suppose that $F$ is a cyclic number field of degree $d$, and pick a $\tau\in G_\Q$ which restricts to a generator of $\Gal(F/\Q).$ Let $E/F$ be an elliptic curve. Then, the twist $X_E^\pr(p)$ of $X(p)$ by $\overline\rho^\pr_{E,p}$ is defined over $\Q$ if and only if there exists a continuous character $\overline\chi: G_F\to \{\pm1\}\subseteq\F_p^\times$ and a symplectic isomorphism
\[\varphi:\tau(E)[p]\xrightarrow{\sim} E[p]\otimes\overline{\chi}\] of $G_F$-representations
such that $(\varphi\circ\tau)^d=\pm\tau^d$ as maps on $E[p]$.

If $F$ is an imaginary quadratic field then, choosing $\tau$ to be a complex conjugation, the above condition on $\varphi\circ\tau$ is equivalent to: $\varphi\circ\tau$ is represented by
$$\begin{pmatrix}
        1&0\\0 &-1
    \end{pmatrix}\,\,\,\textit{or}\,\,\,\begin{pmatrix}
        i&0\\0 &i
    \end{pmatrix}$$
with respect to some basis for $E[p]$, where $i\in\F_p^\times$ has $i^2=-1$ (if such an $i$ exists).
\end{thm}
\subsection{Outline of strategy} It is evident that the twisted curve $X_{\overline \rho}(p)$ being defined over $\Q$ is equivalent to the cocycle $\xi$ lying in the image of the restriction map
\[\Res^{G_\Q}_{G_F}:H^1(G_\Q,\PSL_2(\F_p))\to H^1(G_F,\PSL_2(\F_p)),\]
and this is invariably how we will approach the problem.

Before explaining how the proof of Theorem \ref{thm1} proceeds, let us first dabble in some more terminology. Given a subfield $K$ of $F$, we will say that $\overline\rho$ is \textit{invariant} with respect to the extension $F/K$ if the following holds: for every element $\tau\in G_{K}$, there is some $g_\tau\in \PGL_2(\F_p)$ such that $$\overline\rho(\tau\sigma\tau^{-1})=g_\tau \overline\rho(\sigma)g_\tau^{-1}$$ for all $\sigma\in G_F$. If in addition we can ensure that $\det(g_\tau)=\overline\varepsilon_p^\pr(\tau)$ then $\overline\rho$ will be referred to as being \textit{strongly invariant} with respect to $F/K$. Note that in order to check that $\overline\rho$ is (strongly) invariant with respect to $F/K$, it suffices to find suitable $g_{\tau}$ for $\tau$ running through a set of representatives for the generators of $\Gal(F/K).$ Moreover, $\overline\rho$ will be referred to as being \textit{(strongly) compatible} with $F/K$ if it is (strongly) invariant with respect to $F/K$ and if in addition we have
\[\overline\rho(\tau^{d})=g_\tau^{d}\] for all $\tau\in G_K$,
where $d=d_\tau$ is the least positive integer for which $\tau^d\in G_F$. The proof of Theorem \ref{thm1} begins (or rather, ends) with the following auxiliary result.
\begin{lmm}\label{lmm1}
    Let $L/K$ be a cyclic field extension. Then, a Galois representation $\overline r:G_L\to \PGL_2(\F_p)$ with cyclotomic determinant extends to $G_K$ if and only if $\overline r$ is compatible with $L/K$. The extension has cyclotomic determinant precisely when $\overline r$ is strongly compatible with $L/K$.
\end{lmm}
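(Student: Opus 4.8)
The plan is to exploit the cyclicity of $L/K$ to reduce the whole question to a single lifted generator. Fix $\tau\in G_K$ restricting to a generator of $\Gal(L/K)$, and write $n=[L:K]=d_\tau$, so that $G_K=\bigsqcup_{i=0}^{n-1}\tau^iG_L$ is a disjoint union of cosets, $\tau^n\in G_L$, and $G_L\trianglelefteq G_K$. The first point I would make is that giving a continuous extension $\overline R\colon G_K\to\PGL_2(\F_p)$ of $\overline r$ is the same as choosing one element $g:=\overline R(\tau)$ subject to exactly two constraints: $\overline r(\tau\sigma\tau^{-1})=g\,\overline r(\sigma)\,g^{-1}$ for all $\sigma\in G_L$, and $g^n=\overline r(\tau^n)$. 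These say precisely that $g$ witnesses invariance of $\overline r$ at the single generator $\tau$ and satisfies the power relation $\overline r(\tau^{d_\tau})=g^{d_\tau}$, i.e. that $\overline r$ is compatible with $L/K$ (which, exactly as the text records for invariance, need only be checked at the single generator $\tau$). So the content of the lemma is that these local constraints suffice to glue $\overline r$ and $g$ into a homomorphism on all of $G_K$.

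For the ``if'' direction I would assume $\overline r$ compatible, choose such a $g=g_\tau$, and define $\overline R(\tau^i\sigma)=g^i\,\overline r(\sigma)$ for $0\le i<n$ and $\sigma\in G_L$. This is well defined since the cosets $\tau^iG_L$ are disjoint, and it restricts to $\overline r$. The one thing requiring work — and the main obstacle — is verifying that $\overline R$ is multiplicative. Here I would take a product $(\tau^i\sigma)(\tau^j\sigma')$, rewrite it as $\tau^r\bigl((\tau^n)^q\,(\tau^{-j}\sigma\tau^j)\,\sigma'\bigr)$ with $i+j=qn+r$ and $0\le r<n$, note that normality of $G_L$ puts the bracketed factor in $G_L$, and expand $\overline R$ of it using $\overline r(\tau^n)=g^n$ together with the iterated conjugation relation $\overline r(\tau^{-j}\sigma\tau^j)=g^{-j}\overline r(\sigma)g^j$ (an easy induction from the $j=1$ case). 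The powers of $g$ then recombine as $g^{r+nq-j}=g^{i}$, landing exactly on $\overline R(\tau^i\sigma)\overline R(\tau^j\sigma')$. Continuity of $\overline R$ is automatic, since $\ker\overline R$ contains $\ker\overline r$, which is open in $G_L$ and hence in $G_K$. For the converse, given any extension $\overline R$, one sets $g_\tau:=\overline R(\tau)$ for each $\tau\in G_K$; then $\tau G_L\tau^{-1}=G_L$ and $\overline r(\tau^{d_\tau})=\overline R(\tau)^{d_\tau}=g_\tau^{d_\tau}$ show that $\overline r$ is compatible.

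The determinant refinement is then a one-line addition at each step. In the construction, strong compatibility lets one pick $g=g_\tau$ with the extra property $\det g_\tau=\overline\varepsilon_p^\pr(\tau)$; since $\overline r$ itself has cyclotomic determinant, $\det\overline R(\tau^i\sigma)=\overline\varepsilon_p^\pr(\tau)^i\,\det\overline r(\sigma)=\overline\varepsilon_p^\pr(\tau^i\sigma)$, so $\overline R$ has cyclotomic determinant. Conversely, if $\overline r$ admits an extension $\overline R$ with cyclotomic determinant, the elements $g_\tau=\overline R(\tau)$ already used in the converse automatically have $\det g_\tau=\det\overline R(\tau)=\overline\varepsilon_p^\pr(\tau)$, so $\overline r$ is strongly compatible.

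I expect the genuinely nontrivial step to be the multiplicativity of $\overline R$, where the key realisation is that the single equality $g_\tau^n=\overline r(\tau^n)$ is exactly the obstruction to extending, and that it is the cyclicity of $L/K$ that collapses what would otherwise be a system of compatibility conditions among several lifted generators to this one equation; everything else is bookkeeping with coset representatives and with the projective cyclotomic character.
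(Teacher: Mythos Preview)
Your proof is correct and follows essentially the same route as the paper: both fix a lift $\tau$ of a generator, write down $\overline R(\tau^i\sigma)=g_\tau^i\,\overline r(\sigma)$, and verify it is a homomorphism using invariance together with the single relation $g_\tau^{d}=\overline r(\tau^{d})$. The only cosmetic difference is that the paper allows arbitrary integer exponents $i$ and absorbs the power relation into a well-definedness check, whereas you restrict to $0\le i<n$ and instead invoke the relation when $i+j$ overflows in the multiplicativity check; your added remarks on continuity and on the converse are fine and the paper simply declares those points clear.
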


Once Lemma \ref{lmm1} has been proved, it remains to relate the question of definability of $X_{\overline\rho}(p)$ over $\Q$ with some sort of compatibility statement. The main obstruction here is that $F/\Q$ is not necessarily assumed to be cyclic, but one can get away with considering the lattice of field extensions
$$\begin{tikzcd}[row sep = small, column sep = tiny]
    &\arrow[dash]{dr}\arrow[dash]{dl}F(p)&\\\Q(p)\arrow[dash]{dr}&&F\arrow[dash]{dl}\\&\Q&
\end{tikzcd}
$$ where we put $F(p)$ for the compositum of $F$ and $\Q(p)$. The precise results we are alluding to are given below. Before being able to state them, however, we require one last bit of notation. Let $c_p$ be a lift of the generator of $\Gal(\Q(p)/\Q)$ to $G_\Q.$ If $\Q(p)\not\subseteq F$ then $F$ and $\Q(p)$ are linearly disjoint over $\Q$, since $\Q(p)/\Q$ has degree 2. In that case, seeing as restriction gives an isomorphism
\[\Gal(F(p)/F)\xrightarrow{\sim}\Gal(\Q(p)/\Q),\] we may (and do) choose our $c_p$ to lie in $G_F$.
\begin{prop}\label{prop3}
    Suppose $\Q(p)\not\subseteq F$. Then, in order that $X_{\overline\rho}(p)$ be defined over $\Q$, it is necessary and sufficient that there exist an extension $\overline r: G_{\Q(p)}\to\PSL_2(\F_p)$ of $\overline\rho|_{G_{F(p)}}$ such that
 $$\overline r(c_p^{-1}\sigma c_p)=\overline\rho(c_p)^{-1}\overline r(\sigma)\overline\rho(c_p)$$ for all $\sigma\in G_{\Q(p)}$.
\end{prop}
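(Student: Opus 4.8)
The starting point, following the outline, is that $X_{\overline\rho}(p)$ is defined over $\Q$ exactly when $[\xi]$ lies in the image of $\Res^{G_\Q}_{G_F}\colon H^1(G_\Q,\PSL_2(\F_p))\to H^1(G_F,\PSL_2(\F_p))$, the action of $G_\Q$ on $\PSL_2(\F_p)$ being $\sigma\colon g\mapsto\eta(\sigma)g\eta(\sigma)^{-1}$. Three features of the situation will be used throughout: this action factors through $\Gal(\Q(p)/\Q)$, so $G_{\Q(p)}$ and hence $G_{F(p)}$ act trivially; the image of $\eta$ has order at most two, so $\eta(\sigma)^{-1}=\eta(\sigma)$ and $\eta$ has abelian image; and the choice $c_p\in G_F$ forces $c_p^2\in G_{F(p)}$. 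I will also use that the contragredient is inner, $\overline\rho^\vee=w\,\overline\rho\,w^{-1}$ for the fixed $w=\bigl(\begin{smallmatrix}0&1\\-1&0\end{smallmatrix}\bigr)\in\PSL_2(\F_p)$, and that on $G_{F(p)}$ the cyclotomic character is a square, so that $\overline\rho|_{G_{F(p)}}$ and $\xi|_{G_{F(p)}}=\overline\rho^\vee|_{G_{F(p)}}$ (the equality holding because $\eta$ is trivial there) take values in $\PSL_2(\F_p)$.

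For the forward implication I would begin with a cocycle $\zeta\in Z^1(G_\Q,\PSL_2(\F_p))$ together with an $a\in\PSL_2(\F_p)$ realising $\zeta(\sigma)=a^{-1}\xi(\sigma)(\sigma\cdot a)$ for all $\sigma\in G_F$. As $G_{\Q(p)}$ acts trivially, $\overline r_0:=\zeta|_{G_{\Q(p)}}$ is a continuous homomorphism into $\PSL_2(\F_p)$; evaluating the coboundary relation on $G_{F(p)}$ identifies $\overline r_0|_{G_{F(p)}}$ with $a^{-1}\overline\rho^\vee a=(w^{-1}a)^{-1}\,\overline\rho\,(w^{-1}a)$, so putting $b:=w^{-1}a$ the homomorphism $\overline r:=b\,\overline r_0\,b^{-1}$ extends $\overline\rho|_{G_{F(p)}}$. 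For the conjugacy relation I would feed the pairs $(c_p,\sigma)$ and $(c_p\sigma c_p^{-1},c_p)$, with $\sigma\in G_{\Q(p)}$, into the cocycle identity; using that $G_{\Q(p)}$ is normal in $G_\Q$ and acts trivially, these combine to $\overline r_0(c_p\sigma c_p^{-1})=g_0\,\overline r_0(\sigma)\,g_0^{-1}$ with $g_0=\zeta(c_p)\eta(c_p)$. The point is that the \emph{same} $a$ also governs $\zeta(c_p)$ (since $c_p\in G_F$), which collapses $g_0$ to $a^{-1}\overline\rho^\vee(c_p)a$; hence after conjugating by $b$ the conjugating element is exactly $b g_0 b^{-1}=w^{-1}\overline\rho^\vee(c_p)w=\overline\rho(c_p)$. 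Rewriting $\overline r(c_p\sigma c_p^{-1})=\overline\rho(c_p)\,\overline r(\sigma)\,\overline\rho(c_p)^{-1}$ in the form of the statement completes this direction.

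For the converse, given such an $\overline r$, the plan is to spread it across the cyclic degree-two extension $\Q(p)/\Q$ and then twist back. Since $c_p^2\in G_{F(p)}$ and $\overline r|_{G_{F(p)}}=\overline\rho|_{G_{F(p)}}$, the relation $\overline r(c_p^2)=\overline\rho(c_p)^2$ holds for free; together with the assumed conjugacy relation and $\det\overline\rho(c_p)=\overline\varepsilon_p^\pr(c_p)$, this is exactly strong compatibility of $\overline r$ with $\Q(p)/\Q$. One may then glue $\overline r$ with the assignment $c_p\mapsto\overline\rho(c_p)$ into a homomorphism $\overline r'\colon G_\Q\to\PGL_2(\F_p)$ — equivalently, apply Lemma \ref{lmm1} with its extension normalised so that $c_p\mapsto\overline\rho(c_p)$ — and since $G_F=G_{F(p)}\sqcup G_{F(p)}c_p$ this $\overline r'$ restricts to $\overline\rho$ on $G_F$, while its determinant equals $\overline\varepsilon_p^\pr$ because both characters are trivial on $G_{\Q(p)}$ and agree at $c_p$. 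Finally $\zeta(\sigma):=(\overline r')^\vee(\sigma)\,\eta(\sigma)$ is a $1$-cocycle on $G_\Q$ — a cocycle because $\eta$ has abelian image, and valued in $\PSL_2(\F_p)$ because $(\overline r')^\vee$ and $\eta$ both have determinant $\overline\varepsilon_p^\pr$ — restricting to $\xi$ on $G_F$; hence $[\xi]$ lies in the image of $\Res^{G_\Q}_{G_F}$ and $X_{\overline\rho}(p)$ descends.

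The step I expect to demand genuine care is the coordination in the forward direction that pins the conjugating element down to $\overline\rho(c_p)$ rather than to something merely conjugate to it: a priori the cocycle identity yields only \emph{some} $g$ with $\overline r(c_p\sigma c_p^{-1})=g\,\overline r(\sigma)g^{-1}$, unique only modulo the centraliser of $\overline r(G_{\Q(p)})$ in $\PGL_2(\F_p)$, which need not be trivial; one must transport the single coboundary element $a$ through both the identification on $G_{F(p)}$ and the evaluation at $c_p$ so that the bookkeeping forces this $g$ to be $\overline\rho(c_p)$ on the nose. Everything else is a routine unwinding of cocycle and coboundary identities.
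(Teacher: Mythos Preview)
Your argument is correct. The route, however, differs from the paper's in two respects worth noting. First, in the forward direction the paper invokes a preliminary lemma to the effect that $[\xi]$ lies in the image of restriction if and only if $\xi$ itself extends to a cocycle on $G_\Q$, and then works entirely with such a literal extension $x$; you instead keep the coboundary element $a$ in play and carry it through both the restriction to $G_{F(p)}$ and the evaluation at $c_p$, using the inner realisation $\overline\rho^\vee=w\overline\rho w^{-1}$ to renormalise at the end. This is exactly the ``coordination'' you flag in your final paragraph, and you handle it correctly: the computation $g_0=\zeta(c_p)\eta(c_p)=a^{-1}\overline\rho^\vee(c_p)a$ followed by conjugation by $b=w^{-1}a$ does force the conjugating element to be $\overline\rho(c_p)$ on the nose. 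Second, in the converse direction the paper writes down the candidate cocycle $x$ explicitly via the coset decomposition and checks the cocycle identity case by case, whereas you observe that the hypotheses amount to strong compatibility of $\overline r$ with $\Q(p)/\Q$ and invoke Lemma~\ref{lmm1} to produce the extension $\overline r'$ in one stroke (the paper reserves Lemma~\ref{lmm1} for deducing Theorem~\ref{thm1} from the Propositions, not for proving the Propositions themselves). Your approach is a bit more conceptual and reuses existing machinery; the paper's is more explicit and self-contained. Both reach the same conclusion with no real difference in depth.
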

The special case when $\Q(p)\subseteq F$ is admittedly a bit clumsy, due to the fact that it is impossible to choose our $c_p$ to lie in $G_F$.
\begin{prop}\label{prop4}
 Suppose $\Q(p)\subseteq F$. Then, in order that $X_{\overline\rho}(p)$ be defined over $\Q$, it is necessary and sufficient that there exist an element $g\in\PGL_2(\F_p)\setminus\PSL_2(\F_p)$ and an extension
 $\overline r: G_{\Q(p)}\to\PSL_2(\F_p)$ of $\overline\rho$ such that $\overline r(c_p^2)=g^2$ and $$\overline r(c_p^{-1}\sigma c_p)=g^{-1}\overline r(\sigma)g$$ for all $\sigma\in G_{\Q(p)}.$
\end{prop}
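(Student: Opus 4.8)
The plan is to reformulate the condition $[\xi]\in\mathrm{im}\bigl(\Res^{G_\Q}_{G_F}\bigr)$ as a statement about a $\PGL_2(\F_p)$-valued representation of $G_\Q$, and then to package such a representation as a pair $(g,\overline r)$. The main tool is a dictionary between cocycles and representations. Because $\det\circ\,\eta=\overline\varepsilon_p^\pr$, because $\eta(\sigma)^2=1$ for all $\sigma$, and because the image of $\eta$ is abelian, an elementary check shows that $\zeta\mapsto\zeta\cdot\eta$ is a bijection from $1$-cocycles $\zeta\colon G_\Q\to\PSL_2(\F_p)$ (for the conjugation action through $\eta$) onto homomorphisms $\overline R\colon G_\Q\to\PGL_2(\F_p)$ with $\det\overline R=\overline\varepsilon_p^\pr$, with inverse $\overline R\mapsto\overline R\cdot\eta$; cohomologous cocycles go to $\PSL_2(\F_p)$-conjugate homomorphisms, and the bijection is compatible with restriction to $G_{\Q(p)}$ and to $G_F$, on which $\eta$ is trivial (because $\Q(p)\subseteq F$) so that cocycles are simply homomorphisms. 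Since $\overline\varepsilon_p^\pr$ is trivial on $G_F$, we have $\xi|_{G_F}=\overline\rho^\vee|_{G_F}$, and since the contragredient on $\PGL_2(\F_p)$ is inner --- it is conjugation by the determinant-one matrix $\bigl(\begin{smallmatrix}0&-1\\1&0\end{smallmatrix}\bigr)$ --- this represents the same class as $\overline\rho|_{G_F}$ in $H^1(G_F,\PSL_2(\F_p))$. Combining, $X_{\overline\rho}(p)$ is defined over $\Q$ precisely when there is a continuous homomorphism $\overline R\colon G_\Q\to\PGL_2(\F_p)$ with $\det\overline R=\overline\varepsilon_p^\pr$ whose restriction to $G_F$ is $\PSL_2(\F_p)$-conjugate to $\overline\rho$; after conjugating $\overline R$ we may assume $\overline R|_{G_F}=\overline\rho$ outright.

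It then remains to show that the existence of such an $\overline R$ is equivalent to the existence of a pair $(g,\overline r)$ as in the statement. In one direction I would take $\overline r=\overline R|_{G_{\Q(p)}}$, which lands in $\PSL_2(\F_p)$ because $\overline\varepsilon_p^\pr$ is trivial on $G_{\Q(p)}$ and satisfies $\overline r|_{G_F}=\overline\rho$, and $g=\overline R(c_p)$, which lies outside $\PSL_2(\F_p)$ because $\overline\varepsilon_p^\pr(c_p)\neq 1$; the relations $\overline r(c_p^2)=g^2$ and $\overline r(c_p^{-1}\sigma c_p)=g^{-1}\overline r(\sigma)g$ are immediate from multiplicativity of $\overline R$. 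Conversely, given $(g,\overline r)$, I would use $G_\Q=G_{\Q(p)}\sqcup G_{\Q(p)}c_p$ to define $\overline R(\sigma)=\overline r(\sigma)$ and $\overline R(\sigma c_p)=\overline r(\sigma)g$ for $\sigma\in G_{\Q(p)}$, and then check multiplicativity on the four combinations of cosets: the reformulation $\overline r(c_p\tau c_p^{-1})=g\,\overline r(\tau)\,g^{-1}$ of the twisted-conjugation hypothesis handles the two cases in which the first factor lies outside $G_{\Q(p)}$, and the case of two such factors is exactly where $\overline r(c_p^2)=g^2$ is needed. One verifies $\det\overline R=\overline\varepsilon_p^\pr$ and $\overline R|_{G_F}=\overline\rho$ coset by coset, and $\overline R$ is continuous because it agrees with a continuous map on the open subgroup $G_{\Q(p)}$ and hence has open kernel. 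This gluing is essentially the index-two instance of Lemma \ref{lmm1} for the cyclic extension $\Q(p)/\Q$, with $g$ standing in for the value at $c_p$ of a hypothetical extension of $\overline\rho$ to $G_\Q$ --- a value that makes no literal sense since $c_p\notin G_F$.

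The coset bookkeeping is routine; the step I expect to need the most care is the first one --- tracking determinants (this is what forces $g\notin\PSL_2(\F_p)$ and $\overline r$ into $\PSL_2(\F_p)$), the twisting cocycle $\eta$, and the harmless conjugation introduced by the contragredient. The feature that distinguishes this case from Proposition \ref{prop3} is precisely that, once $\Q(p)\subseteq F$, the cocycle $\xi$ is trivial up to conjugacy on both $G_F$ and $G_{\Q(p)}$, so the entire arithmetic obstruction is concentrated in the one extra generator $c_p$ of $\Gal(\Q(p)/\Q)$ and cannot be absorbed into $\overline\rho$; this is why the criterion has to be phrased using the auxiliary element $g\in\PGL_2(\F_p)\setminus\PSL_2(\F_p)$.
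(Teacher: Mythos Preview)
Your argument is correct, and it takes a somewhat different route from the paper's. The paper works directly at the level of cocycles: it uses the lemma that $[\xi]$ lies in the image of restriction iff $\xi$ extends to a cocycle $x:G_\Q\to\PSL_2(\F_p)$, observes that $x|_{G_{\Q(p)}}=\overline r^\vee$ for some homomorphism $\overline r$ extending $\overline\rho$, writes the undetermined value $x(c_p)$ in the form $(g^{-1})^tV$, and then grinds through the cocycle identity on the cosets of $G_{\Q(p)}$ in $G_\Q$ to extract the two conditions on $(g,\overline r)$. The contragredient and the matrix $V$ stay in the computation throughout. You instead set up the bijection $\zeta\mapsto \zeta\cdot\eta$ between $\PSL_2$-valued $1$-cocycles on $G_\Q$ and $\PGL_2$-valued homomorphisms with determinant $\overline\varepsilon_p^\pr$, then dispose of the contragredient once and for all as conjugation by an element of $\PSL_2(\F_p)$; this converts the descent question directly into the existence of an extension $\overline R:G_\Q\to\PGL_2(\F_p)$ of $\overline\rho$ with cyclotomic determinant, and the pair $(g,\overline r)$ drops out from the index-two instance of Lemma~\ref{lmm1}. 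In effect you prove (the $\Q(p)\subseteq F$ case of) Theorem~\ref{thm1} first and read Proposition~\ref{prop4} off of it, whereas the paper proceeds in the opposite logical order. Your path is conceptually cleaner and makes the link to Theorem~\ref{thm1} transparent; the paper's is more hands-on and avoids having to articulate the cocycle-to-homomorphism dictionary. (Incidentally, that dictionary holds for any homomorphism $\eta$; the hypotheses $\eta^2=1$ and abelian image that you invoke are true but not actually needed for the bijection.)
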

Theorem \ref{thm1} then follows from Propositions \ref{prop3} and \ref{prop4}, by applying Lemma \ref{lmm1} to the cyclic extension $\Q(p)/\Q$.
\subsection{Applications} We conclude this introductory section with some examples illustrating the possible applications of Theorem \ref{thm1}. 

\begin{ex} Let $F$ be a number field with normal closure $K$. Suppose we are given a $\Q$-curve $E/F$, i.e., an elliptic curve whose $G_\Q$-conjugates are all $\overline \Q$-isogenous. We contend that, as long as $E$ does not admit complex multiplication, there is an infinite number of rational primes $p\geq 7$ such that $X_E^\pr(p)$ is defined over $\Q$.

Choose an isogeny 
        $$f_\tau:\tau(E)\to E$$ for each $\tau\in \Gal(K/\Q)$. By the Chebotarev density theorem, a positive proportion of all primes $p\geq 7$ have the property that each $\deg(f_\tau)$ is a nonzero square in $\F_p$. Fix such a $p$. Then, we can consider the assignment
    \[\overline r:G_\Q\to \End_{\F_p}(E[p]),\quad \sigma\mapsto f_\sigma\circ\sigma,\] where we write $f_\sigma=f_{\sigma|_K}$ to ease notation. First, note that every map of type $\overline r(\sigma)$ is a linear automorphism; for if we let $$\widehat f_\sigma:E\to \sigma(E)$$ be the isogeny dual to $f_\sigma$ then the fact that $\deg(f_\sigma)\not\equiv0\pmod{p}$ implies that
    \[\overline r(\sigma)^{-1}={\deg(f_\sigma)}^{-1}\cdot(\sigma^{-1}\circ \widehat f_\sigma)\] is a well-defined element of $\End_{\F_p}(E[p])$, inverse to $\overline r(\sigma)$ by construction. Now, given $\sigma,\sigma\in G_\Q$, the self-isogeny $f_\sigma\circ\sigma\circ f_{\sigma'}\circ \sigma^{-1}\circ \widehat f_{\sigma\sigma'}\in \End_{\overline \Q}(E)$ must be given by multiplication by an element of $\Z$ (since $E$ is not a CM curve), so 
\[\overline r(\sigma)\overline r(\sigma')\overline r(\sigma\sigma')^{-1}=\deg(f_{\sigma\sigma'})^{-1}\cdot (f_\sigma\circ\sigma\circ f_{\sigma'}\circ \sigma^{-1}\circ \widehat f_{\sigma\sigma'})\in \Aut_{\F_p}(E[p])\] becomes trivial upon projectivisation. We thus obtain a projective representation $\overline r^\pr$ of $G_\Q$ from $\overline r$ which restricts to $\bar\rho^\pr_{E,p}$ on $G_F$, again in view of the absence of complex multiplication (or, we could have taken $f_{\tau}=\id_E$ whenever $\tau\in \Gal(K/F)$). In order that we may safely apply Theorem \ref{thm1}, it remans to check that $\overline r^\pr$ has cyclotomic determinant. But since $f_\sigma$ and $\widehat f_\sigma$ are adjoint for to the Weil pairing \cite[Proposition III.8.2]{silverman2009arithmetic}, we have
\[\det(\overline r(\sigma))=\deg(f_\sigma)\overline\varepsilon_p(\sigma),\] and the latter coincides with $\overline \varepsilon_p(\sigma)$ up to an element of $\F_p^{\times2}$ by our choice of $p$.
\end{ex}
\begin{ex} We presently consider Theorem \ref{thm2} in the context of quadratic twists. Fix a cyclic extension $F/\Q$ of degree $d$ and let $\tau\in G_\Q$ restrict to a generator of $\Gal(F/\Q).$ Consider the elliptic curve 
\[E:\lambda y^2=x^3+ax+b\] where $\lambda\in F^\times$ and $a,b\in \Q$. If $\delta\in\overline\Q$ is a square-root of $\tau(\lambda)/\lambda$ then $(x,y)\mapsto (x,\delta y)$ gives an isomorphism $f:\tau(E)\to E$ identifying the action of $G_F$ on $\tau(E)$ with the action of $G_F$ on $E$ twisted by the character
 $\overline\chi:G_F\to \{\pm1\}$ associated to $F(\delta)/F$. A standard argument shows that
     \[(\delta\,\tau(\delta)\,\cdots\,\tau^{d-1}(\delta))^2=1.\] This readily implies that $(f\circ\tau)^d=\pm\tau^d$ holds on $E$. Putting everything together, we find that $X_E^\pr(p)$ is defined over $\Q$ for any rational prime $p\geq 7$. 

\end{ex}
\begin{ex}
Suppose once more that $F$ is a cyclic extension of $\Q$. Let $$\overline\rho:G_F\to\PGL_2(\F_p)$$ be an absolutely irreducible representation, strongly invariant with respect to $F/\Q$. The twist $X_{\overline\rho}(p)$ is then defined over $\Q$.

    Indeed, let $\tau\in G_\Q$ be so that $\Gal(F/\Q)=\langle\tau|_{F}\rangle$. Choose a corresponding $g_{\tau}$ as in the definition of strong invariance above. If $F/\Q$ has degree $d\geq1$ then $\tau^d\in G_F$, whence $\overline\rho(\tau^d)\overline\rho(\sigma)\overline\rho(\tau^d)^{-1}=g_\tau^d\overline\rho(\sigma)g_{\tau}^{-d}$ for $\sigma\in G_F$. So, $g_\tau^{-d}\overline\rho(\tau^d)$ commutes with the image of $\overline\rho$ pointwise, and hence must be trivial by absolute irreducibility and Schur's lemma for projective representations  \cite[Lemma 2.1]{CHENG2015230}. In other words, $\overline\rho$ is strongly compatible with $F/\Q$, and the results above apply.
\end{ex}

\section{Proofs}\subsection{Proof of Lemma \ref{lmm1}} A result to the effect of Lemma \ref{lmm1} seems to be well-known in the literature, but concrete references are rare. We therefore give a quick proof.

It is clear that a projective representation of $G_L$ which admits an extension along the inclusion $G_L\hookrightarrow G_K$ is compatible with $L/K$. Assume conversely that $L/K$ is cyclic of degree $d$ and that $\overline r$ is strongly compatible with $L/K$. Let $\tau\in G_K$ restrict to a generator of $\Gal(L/K)$. We have the coset decomposition
\[G_K=G_L\sqcup\tau G_L\sqcup\cdots\sqcup\tau^{d-1}G_L,\] and we use it to define an extension $\overline R:G_K\to \PGL_2(\F_p)$ in the obvious way; namely, we let $\overline R(\tau^n\sigma)=g_\tau^n\overline r(\sigma)$ for any integer $n$ and any $\sigma\in G_L$. To see that this is well-defined, suppose that $\tau^n\sigma=\tau^{m}\sigma'$ for integers $m,n$ and  $\sigma,\sigma'\in G_L$. Then, $n-m$ must be a multiple of $d$, say it is equal to $dk$ for some integer $k$, and as such we must have $\sigma'=\tau^{dk}\sigma$. Therefore,
\[g_\tau^m\overline r(\sigma')=g_\tau^m\overline r(\tau^d)^k\overline r(\sigma)=g_\tau^{m+dk}\overline r(\sigma)=g_\tau^{n}\overline r
(\sigma)\] using compatibility. It thus remains to check that $\overline R$ is a homomorphism. Let $m,n$ be integers and let $\sigma,\sigma'\in G_L$. Since $G_L$ is normal in $G_K$, we may write $$\tau^n\sigma\tau^m\sigma'=\tau^{m+n}(\tau^{-m}\sigma\tau^m)\sigma'$$ with the two rightmost factors belonging in $G_L$. As such,
\[\overline R(\tau^n\sigma\tau^m\sigma')=g_\tau^{m+n}\overline r(\tau^{-m}\sigma\tau^m)\overline r(\sigma')=g_\tau^{m+n}g_\tau^{-m}\overline r(\sigma)g_\tau^m\overline r(\sigma')=\overline R(\tau^n\sigma)\overline R(\tau^m\sigma')\] by the invariance of $\overline r$ with respect to $L/K$, as required. The statement about the determinant of $\overline R$ is clear.
\subsection{Proof of Propositions \ref{prop3} and \ref{prop4}} Let us first relate the restriction map on cohomology with extensions of cocycles.

\begin{lmm}
    Let $G$ be a group, $\varGamma\subseteq G$ a subgroup and $X$ a (possibly nonabelian) group equipped with an action of $G.$ Consider a cohomology class $[\pi]\in H^1(\varGamma,X)$ represented by a 1-cocycle $\pi:\varGamma\to X$. Then, $[\pi]$ lies in the image of
    \[\Res^G_{\varGamma}:H^1(G,X)\to H^1(\varGamma,X)\]
    if and only if $\pi$ extends to a 1-cocycle $\widetilde\pi:G\to X$.
\end{lmm}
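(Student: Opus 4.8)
The plan is to prove both directions of the equivalence by a direct manipulation of cocycles, the content being entirely elementary once the statement is parsed correctly.

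First I would treat the easy direction. Suppose $[\pi] = \Res^G_\varGamma([\widetilde\pi])$ for some cohomology class $[\widetilde\pi] \in H^1(G,X)$ represented by a cocycle $\widetilde\pi : G \to X$. By definition of the restriction map, the restricted cocycle $\widetilde\pi|_\varGamma$ represents the same class as $\pi$, so there is some $x \in X$ with $\pi(\gamma) = x^{-1}\,\widetilde\pi(\gamma)\,(\gamma \cdot x)$ for all $\gamma \in \varGamma$. Now replace $\widetilde\pi$ by the cohomologous cocycle $\sigma \mapsto x^{-1}\,\widetilde\pi(\sigma)\,(\sigma \cdot x)$ on all of $G$; this is still a cocycle on $G$, and its restriction to $\varGamma$ is literally $\pi$. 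Hence $\pi$ extends to a $1$-cocycle on $G$.

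For the converse, suppose $\pi$ extends to a $1$-cocycle $\widetilde\pi : G \to X$, meaning $\widetilde\pi|_\varGamma = \pi$. Then $\widetilde\pi$ defines a class $[\widetilde\pi] \in H^1(G,X)$, and $\Res^G_\varGamma([\widetilde\pi])$ is by definition the class of $\widetilde\pi|_\varGamma = \pi$, which is $[\pi]$. So $[\pi]$ lies in the image of the restriction map.

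There is no real obstacle here; the only point requiring a modicum of care is the cocycle relation for the twisted cocycle $\sigma \mapsto x^{-1}\widetilde\pi(\sigma)(\sigma\cdot x)$ in the nonabelian setting — one should check that if $\widetilde\pi$ satisfies $\widetilde\pi(\sigma\sigma') = \widetilde\pi(\sigma)\,(\sigma\cdot\widetilde\pi(\sigma'))$ then so does its twist, which is the standard verification underlying the definition of $H^1$ of a nonabelian group. I would simply record this computation in one line and move on. It is also worth remarking explicitly that the restriction of a cocycle on $G$ to the subgroup $\varGamma$ is again a cocycle (with respect to the restricted action of $\varGamma$ on $X$), since this is what makes $\Res^G_\varGamma$ well-defined in the first place; this too needs no argument beyond unwinding definitions.
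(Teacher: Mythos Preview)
Your proof is correct and follows essentially the same approach as the paper: twist a cocycle on $G$ whose restriction is merely cohomologous to $\pi$ by the element $x$ witnessing that cohomology relation, thereby obtaining a genuine extension of $\pi$; the converse is immediate. The paper's write-up is terser (it dismisses the converse as ``clear'') but the argument is identical.
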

\begin{proof}
    Suppose $[\pi]=[\pi']$ for a 1-cocycle $\pi':G\to X$. This means that there is some $x\in X$ such that $\pi(\gamma)=x^{-1}\pi'(\gamma){\,}^\gamma x$ for all $\gamma\in \varGamma$. We then obtain an extension of $\pi$ to $G$ on setting $\widetilde\pi(g)=x^{-1}\pi'(g){\,}^gx$ for $g\in G$. Converse is clear.
\end{proof}

In light of the above lemma, we need only consider the question of whether $\xi$ extends to a cocycle on $G_\Q$. Let us treat the case $\Q(p)\not\subseteq F$ first. By construction, we have a coset decomposition
\[G_F=G_{F(p)}\sqcup c_p G_{F(p)}\] and a similar one with \enquote*{$F$ replaced by $\Q$}. This then gives the more explicit equations
\begin{equation}\label{1}\xi(\sigma)=
    \begin{cases}
        \overline\rho^\vee(\sigma)&\text{if }\sigma\in G_{F(p)};\\\overline\rho^\vee(c_p)\overline\rho^\vee(c_p^{-1}\sigma)V&\text{if }\sigma\notin G_{F(p)};
    \end{cases}
\end{equation} for $\sigma\in G_{F}$; recall that, in Section \ref{1.2}, we defined
\[V=\begin{pmatrix}
    0&v\\-1&0
\end{pmatrix}\]
with $v\in \F_p^\times$ a fixed quadratic non-residue.

The task at hand is to determine the precise conditions under which there is a cocycle $x:G_\Q\to\PSL_2(\F_p)$ extending $\xi$. Of course, such an $x$ will verify the conditions in \eqref{1} for $\sigma\in G_{F(p)}$. Noting that the action of $G_{\Q(p)}$ on $\PSL_2(\F_p)$ is trivial, we thus see that $\overline r=x^\vee|_{G_{\Q(p)}}$ must necessarily be a representation with cyclotomic determinant extending $\overline\rho|_{G_{F(p)}}$. So, we have to determine when such an $\overline r$ extends to a suitable cocycle $x:G_\Q\to \PSL_2(\F_p).$ Using a coset decomposition similar to the one given for $G_F$ above, it becomes clear that such an $x$ is uniquely determined by the value it takes on $c_p$. More concretely, if $x:G_\Q\to \PSL_2(\F_p)$ is a cocycle agreeing with $\overline r$ on $G_{\Q(p)}$ and taking the value $x_p$ at $c_p$ then the cocycle condition shows that $x(c_p\sigma)=x_pV\overline r^\vee(\sigma)V$ for $\sigma\in G_{\Q(p)}$.

However, \eqref{1} supplies us with a value for $x_p$ since $c_p\in G_F$; namely, we have no other choice but to take $x_p=\overline\rho^\vee(c_p)V$. Defining a function $x:G_\Q\to \PSL_2(\F_p)$ by \begin{equation}
    x(\sigma)=\label{2}\begin{cases}
    \overline r^\vee(\sigma)&\text{if }\sigma\in G_{\Q(p)};\\\overline\rho^\vee(c_p)\overline r^\vee(c_p^{-1}\sigma)V&\text{if }\sigma\notin G_{\Q(p)},
\end{cases}
    \end{equation} it remains to see if $x$ is a cocycle agreeing with $\xi$ on $G_F$. Comparing \eqref{1} and \eqref{2}, that $x$ agrees with $\xi$ on $G_{F}$ follows under no further assumptions by using the fact that $\overline r$ restricts to $\overline \rho$ on $G_{F(p)}$. Let us now consider the cocycle condition for our $x$. Pick $\sigma,\tau\in G_\Q$. Assume first that $\sigma\in G_{\Q(p)}$. Then we need $x(\sigma\tau)=\overline r^\vee(\sigma)x(\tau)$. This is clear if $\tau\in G_{\Q(p)}$ since $\overline r$ is a homomorphism, and if $\tau\notin G_{\Q(p)}$ then this amounts to saying that
\[\overline\rho^\vee(c_p)\overline r^\vee(c_p^{-1}\sigma\tau)V=\overline r^\vee(\sigma)\overline \rho^\vee(c_p)\overline r^\vee(c_p^{-1}\tau)V.\] But then, $c_p^{-1}\sigma\tau=(c_p^{-1}\sigma c_p)(c_p^{-1}\tau)$ with both factors belonging to $G_{\Q(p)}$, so the above condition yields\begin{equation}
    \label{3}\overline r(c_p^{-1}\sigma c_p)=\overline\rho(c_p)^{-1}\overline r(\sigma)\overline\rho(c_p)
\end{equation} for $\sigma\in G_{\Q(p)}$ upon rewriting. The remaining case of $\sigma\notin G_{\Q(p)}$ is similar, and in the end also gives condition \eqref{3}. This establishes Proposition \ref{prop3}.

We give a few remarks to indicate how the argument modifies when $\Q(p)\subseteq F$. In that case, condition \eqref{1} says only that $\xi$ is a homomorphism $G_{F}\to \PSL_2(\F_p)$. If $x$ is a cocycle extending $\xi$ to $G_\Q$ then $x|_{G_{\Q(p)}}=\overline r^\vee$ for $\overline r$ a representation of $G_{\Q(p)}$ extending $\overline\rho$, and as before $\overline r$ determines $x$ once a value $x_p$ of $x$ at $c_p$ has been fixed. However, since $c_p$ can no longer be assumed to lie in $G_F$, an \enquote*{arbitrary} choice of $x_p\in \PSL_2(\F_p)$ has to be made. Then, using the coset decomposition
\[\PGL_2(\F_p)=\PSL_2(\F_p)\sqcup \PSL_2(\F_p)V\] we may write $x_p=(g^{-1})^tV$ for a unique $g\in\PGL_2(\F_p)$ not lying in $\PSL_2(\F_p)$. Defining a function $x:G_\Q\to \PGL_2(\F_p)$ as in \eqref{2} with $(g^{-1})^t$ in place of $\overline\rho^\vee(c_p)$, the subsequent argument goes through verbatim and we see that an $x$ thus defined satisfies the required properties if and only if our choice of $g$ verifies:
\begin{itemize}
\item $\overline r(c_p^2)=g^2$;
    \item $\overline r(c_p^{-1}\sigma c_p)=g^{-1}\overline r(\sigma)g$ for all $\sigma\in G_{\Q(p)}$.
\end{itemize}
We note that the former property comes from considering the cocycle condition; indeed, using the fact that $c_p^2\in G_{\Q(p)}$, we obtain
\[(g^{-2})^t=x(c_p)Vx(c_p)V=x(c_p){\,}^{c_p}x(c_p)=x(c_p^2)=\overline r^\vee(c_p^2).\] Proposition \ref{prop4} follows.
\subsection{Proof of Theorem \ref{thm2}} We assume for the remainder of this article that $F$ is a cyclic extension of $\Q$, which will later be specified to an imaginary quadratic field. Fix an automorphism $\tau\in G_\Q$ restricting to a generator of $\Gal(F/\Q)$. Let $E/F$ be an elliptic curve equipped with a symplectic identification $E[p]\cong \F_p^{\oplus 2}.$ Put $X_E^\pr(p)$ for the twist of $X(p)$ by the projective representation $\overline\rho^\pr_{E,p}$.

In view of Theorem \ref{thm1} and Lemma \ref{lmm1}, the question of whether $X_E^\pr(p)$ is defined over $\Q$ reduces to the question of whether we can find an element $g\in\PGL_2(\F_p)$ satisfying the following conditions:
\begin{itemize}
    \item $\overline\rho_{E,p}^\pr(\tau\sigma\tau^{-1})=g\overline\rho_{E,p}^\pr(\sigma)g^{-1}$ for all $\sigma\in G_F$;
    \item $g^d=\overline\rho_{E,p}^\pr(\tau^d)$ where $d=(F:\Q);$
    \item $\det(g)=\overline\varepsilon_p^\pr(\tau)$.
\end{itemize}
Consider a lift $h\in\GL_2(\F_p)$ of such a $g$ such that $\det(h)=\overline\varepsilon_p(\tau)$. Then, there is a scalar $\lambda\in\F_p^\times$ for which $h^d=\lambda\overline\rho_{E,p}(\tau^d)$. Taking determinants here yields $\lambda=\pm1.$ Since $\tau$ identifies the action of $\sigma\in G_F$ on $\tau(E)[p]$ with the action of $\tau\sigma\tau^{-1}$ on $E[p]$, the first condition above, when lifted to $\GL_2(\F_p)$, reads
\[\overline\rho_{\tau(E),p}(\sigma)=h\overline\chi(\sigma)\overline\rho_{E,p}(\sigma)h^{-1}\] for a function $\overline\chi:G_F\to \F_p^\times,$ which is immediately seen to be a continuous character. Comparing determinants in the above equation gives
\[\overline\varepsilon_p(\sigma)=\overline\chi(\sigma)^2\overline\varepsilon_p(\sigma),\]
whence it follows that $\overline \chi(G_F)\subseteq \{\pm1\}$. This is the first part of Theorem \ref{thm2}.

Explicitly, if $h$ represents a map $\phi:E[p]\to E[p]$ then $\varphi=\phi\circ\tau^{-1}$ identifies the action of $G_F$ on $\tau(E)[p]$ with that on $E[p]\otimes\overline\chi$, and comparing the Weil pairings shows that the determinant condition on $h$ is equivalent to $\varphi$ being symplectic. Finally, $h^d=\pm\overline\rho_{E,p}(\tau^d)$ in $\GL_2(\F_p)$ translates to $(\varphi\circ\tau)^d=\pm\tau^d$ on $E[p]$.

Now, suppose $F$ is an imaginary quadratic field. We may then arrange for $\tau$ to be a complex conjugation, in which case $\tau^2=1$ and $\overline\varepsilon_p(\tau)=-1$. In particular, there are two possibilities: $h^2=1$ or $h^2=-1$. If the former holds, then $h$ is diagonalisable and has eigenvalues $\pm1$, so it is similar to \[\begin{pmatrix}
    1&0\\0&-1
\end{pmatrix}\] in view of $\det(h)=-1$. Suppose the latter holds. Were $x^2+1$ irreducible over $\F_p$ (which happens when $p\equiv-1\pmod{4}$) then putting $h$ into Frobenius normal form would yield $\det(h)=1$, contradiction. If otherwise $\F_p$ admits a square-root of $-1$, call it $i$, then as above we deduce that $h$ is similar to 
\[\begin{pmatrix}
    i&0\\0&i
\end{pmatrix}.\]
The proof of Theorem \ref{thm2} is complete.
\section*{Acknowledgements} The author would like to thank James Newton for suggesting the problem of descent for $X_{\overline\rho}(p)$, for many valuable discussions around the subject, and indeed for several helpful comments on an earlier draft. The presentation of this article was improved by the anonymous referee's kind suggestions, for which
the author is most grateful. The research presented in this paper was partially supported by a grant from St Catherine's College, Oxford.
\bibliography{bib.bib}

\providecommand{\bysame}{\leavevmode\hbox to3em{\hrulefill}\thinspace}
\providecommand{\MR}{\relax\ifhmode\unskip\space\fi MR }
\providecommand{\MRhref}[2]{%
  \href{http://www.ams.org/mathscinet-getitem?mr=#1}{#2}
}
\providecommand{\href}[2]{#2}
\begin{thebibliography}{FLHS15}

\bibitem[Buz12]{buzzard2012potential}
K.~Buzzard, \emph{Potential modularity – a survey}, Non-abelian fundamental groups and {I}wasawa theory (J.~Coates, M.~Kim, F.~Pop, M.~Saïdi, and P.~Schneider, eds.), London Mathematical Society Lecture Note Series, vol. 393, Cambridge University Press, 2012, pp.~188--211.

\bibitem[Che15]{CHENG2015230}
C.~Cheng, \emph{A character theory for projective representations of finite groups}, Linear Algebra and its Applications \textbf{469} (2015), 230--242.

\bibitem[CN23]{caraiani2023modularity}
A.~Caraiani and J.~Newton, \emph{On the modularity of elliptic curves over imaginary quadratic fields}, preprint available at \url{https://arxiv.org/abs/2301.10509}, 2023.

\bibitem[DR73]{deligne1973schemas}
P.~Deligne and M.~Rapoport, \emph{Les sch\'{e}mas de modules de courbes elliptiques}, Modular Functions of One Variable {II} (Deligne P. and W.~Kuijk, eds.), Lecture Notes in Mathematics, vol. 349, Springer-Verlag, 1973, pp.~143--316.

\bibitem[DS05]{diamond2005first}
F.~Diamond and J.~M. Shurman, \emph{A first course in modular forms}, Graduate Texts in Mathematics, vol. 228, Springer, 2005.

\bibitem[FLHS15]{freitas2015elliptic}
N.~Freitas, B.~V. Le~Hung, and S.~Siksek, \emph{Elliptic curves over real quadratic fields are modular}, Inventiones mathematicae \textbf{201} (2015), no.~1, 159--206.

\bibitem[FLR05]{10.1112/S0024609304004187}
J.~Fernández, J.-C. Lario, and A.~Rio, \emph{On twists of the modular curves {X(p)}}, Bulletin of the London Mathematical Society \textbf{37} (2005), no.~3, 342--350.

\bibitem[Maz98]{mazur1998open}
B.~Mazur, \emph{Open problems regarding rational points on curves and varieties}, Galois representations in arithmetic algebraic geometry (A.~J. Scholl and R.~L. Taylor, eds.), London Mathematical Society Lecture Note Series, vol. 254, Cambridge University Press, 1998, pp.~239--266.

\bibitem[Sil09]{silverman2009arithmetic}
J.~H. Silverman, \emph{The arithmetic of elliptic curves (second edition)}, Graduate Texts in Mathematics, vol. 106, Springer, 2009.

\bibitem[Tay02]{taylor2002remarks}
R.~Taylor, \emph{Remarks on a conjecture of {F}ontaine and {M}azur}, Journal of the Institute of Mathematics of Jussieu \textbf{1} (2002), no.~1, 125--143.

\bibitem[Wil95]{wiles1995modular}
A.~Wiles, \emph{Modular elliptic curves and {F}ermat's last theorem}, Annals of mathematics \textbf{141} (1995), no.~3, 443--551.

\end{thebibliography}
\bibliographystyle{amsalpha}
\,
\end{document}